\theoremstyle{definition}
\newtheorem{definition}{Definition}
\theoremstyle{plain}
\newtheorem{theorem}{Theorem}
\newtheorem{lemma}{Lemma}
\newtheorem{corollary}{Corollary}
\theoremstyle{remark}
\newtheorem{remark}{Remark}
\theoremstyle{definition}
\newtheorem{example}{Example}
\begin{document}
\title{(1, 2) and weak (1, 3) homotopies on knot projections}
\author{Noboru Ito \quad Yusuke Takimura}
\maketitle

\begin{abstract}
In this paper, we obtain the necessary and sufficient condition that two knot projections are related by a finite sequence of the first and second flat Reidemeister moves (Theorem \ref{12theorem}).  We also consider an equivalence relation that is called weak (1, 3) homotopy.  This equivalence relation occurs by the first flat Reidemeister move and one of the third flat Reidemeister moves.  We introduce a map sending weak (1, 3) homotopy classes to knot isotopy classes (Sec. \ref{weakInvariant}).  Using the map, we determine which knot projections are trivialized under weak (1, 3) homotopy (Corollary \ref{trivialCondition}).  
\end{abstract}

\section*{Addendum: added 2014}
After this article was published, the following information about doodles was pointed out by Roger Fenn. A doodle was introduced by Fenn and Taylor [2], which is a finite collection of closed curves without triple intersections on a closed oriented surface considered up to the second flat Reidemeister moves with the condition ($\ast$) that each component has no self-intersections. Khovanov [4] introduced doodle groups, and for his process, he considered doodles under a more generalized setting (i.e., removing the condition ($\ast$) and permitting the first flat Reidemeister moves). He showed [4, Theorem 2.2], a result similar to our [3, Theorem 2.2 (c)]. He also pointed out that [1, Corollary 2.8.9] gives a result similar to [4, Theorem 2.2].
The authors first noticed the above results by Fenn and Khovanov via personal communication with Fenn, and therefore, the authors would like to thank Roger Fenn for these references.

\section{Introduction}\label{intro}
Throughout this paper, we consider objects in the smooth category.  A {\it{knot}} is defined as a circle smoothly embedding into ${\mathbb{R}}^{3}$ and its {\it{knot projection}} is a {\it{regular projection}} of the knot to a sphere.  Here, the term {\it{regular projection}} means a projection to a sphere in which the image has only transversal double points of self-intersection.   If each double point of a knot projection is specified by over-crossing and under-crossing branches, we call the knot projection a {\it{knot diagram}}.  Therefore, for a given knot projection that has $n$ double points, it is possible to consider $2^{n}$ knot diagrams.  Indeed, knot projections have been studied by this approach \cite{taniyama1, taniyama2}.  

Knot isotopy classes are often interpreted as equivalence classes of knot diagrams under first, second, and third Reidemeister moves defined by Fig.~\ref{reidemeister}.  The diagrams in Fig.~\ref{reidemeister} show that the local replacements on the neighborhoods and the exterior of the neighborhoods are the same for both diagrams of each move.  
\begin{figure}[b]
\begin{picture}(0,0)
\put(23,50){$\Omega_1$}
\put(121,50){$\Omega_2$}
\put(245,50){$\Omega_3$}
\end{picture}
\includegraphics[width=11cm]{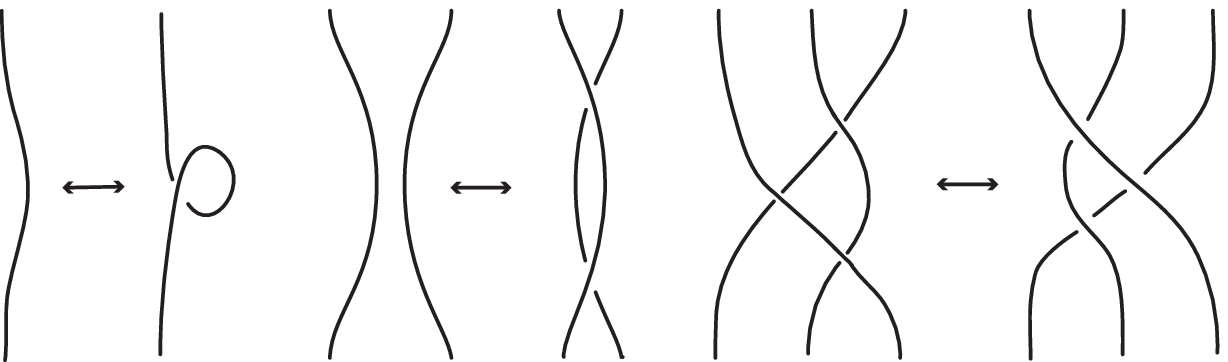}
\caption{Reidemeister moves $\Omega_1$, $\Omega_2$, and $\Omega_3$.  }\label{reidemeister}
\end{figure}
As shown in Fig.~\ref{ProjReidemeister}, we can define local moves of knot projections, called homotopy moves in this paper, by seeing projection images of Reidemeister moves of knot diagrams.  We call $H_1$, $H_2$, and $H_3$ of Fig.~\ref{ProjReidemeister} the first, second, and third homotopy moves or simply $H_1$, $H_2$, and $H_3$ moves.  
\begin{figure}[h]
\begin{picture}(0,0)
\put(23,52){$H_1$}
\put(130,52){$H_2$}
\put(253,52){$H_3$}
\end{picture}
\includegraphics[width=12cm,bb=0 0 369.33 100]{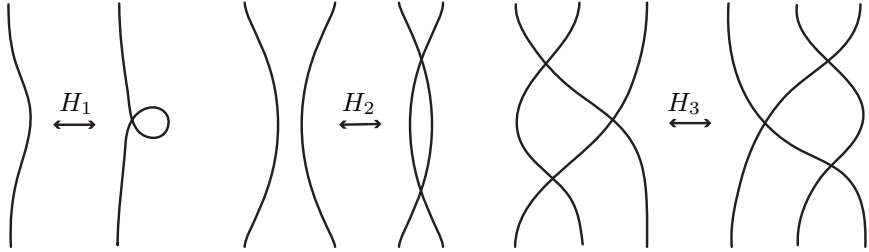}
\caption{Homotopy moves $H_1$, $H_2$, and $H_3$.  }\label{ProjReidemeister}
\end{figure}

Arnold introduced invariants of knot projections under the second or third homotopy moves, called {\it{perestroikas}} and found low-ordered invariants \cite{arnold1, arnold2} by using concepts similar to Vassiliev's ordered invariants of knots \cite{vassiliev}.  

This paper was motivated by attempts to solve the problem that determines which knot projections can be trivialized by the first and third moves.  Starting with Arnold's work, we can choose any two kinds of homotopy moves from the $H_1$, $H_2$, and $H_3$ moves.  First, we take $H_1$ and $H_2$ moves and consider the equivalence relation between knot projections by $H_1$ and $H_2$ moves.  We obtain a necessary and sufficient condition that two knot projections are equivalent under $H_1$ and $H_2$ moves (Theorem \ref{12theorem}).  Second, we take $H_2$ and $H_3$, which is nothing but regular homotopy, and knot projections under regular homotopy are classified by rotation numbers \cite{whitney}.  The last possibility is the case of $H_1$ and $H_3$ moves, which includes open problems: which two knot projections are equivalent under relations generated by $H_1$ and $H_3$ moves is unknown, and even which knot projection can be trivialized by $H_1$ and $H_3$ moves is unknown.  In this paper, as a first step, we give a necessary and sufficient condition that a knot projection can be trivialized under relations by $H_1$ and restricted $H_3$ moves.

The restricted $H_3$ moves are {\it{weak third homotopy moves}}, or simply weak $H_3$ moves, defined by Fig.~\ref{weakPerestroika} for knot projections.  
\begin{figure}[htbp]
\begin{picture}(0,0)
\put(44,45){\tiny weak $H_3$ move}
\put(210,45){\tiny strong $H_3$ move}
\end{picture}
\includegraphics[width=11cm,bb=0 0 435.67 107.5]{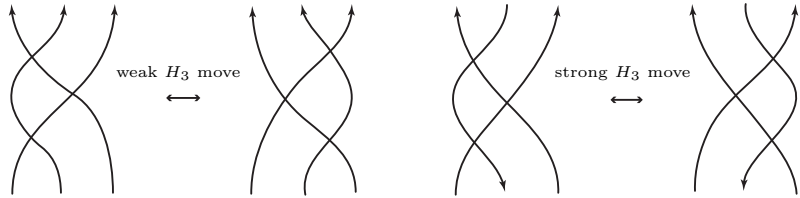}
\caption{Weak (left) and strong (right) third homotopy moves.  Viro \cite{viro} introduced these moves as weak (left) and strong (right) triple point perestroikas.  }\label{weakPerestroika}
\end{figure}

The weak third homotopy move is a positive weak perestroika, and its inverse move is based on the work of Viro \cite{viro} who defined high-ordered invariants by generalizing Arnold invariants.  Viro introduced {\it{weak}} and {\it{strong}} triple point perestroikas (Fig.~\ref{weakPerestroika}).  Arnold's (and subsequently Viro's) triple point perestroikas have positive directions that depend on the connecting branches of triple points (for details, see \cite[p. 6]{arnold2}).

For all knot projections, we consider equivalence classes under the first homotopy move and the weak third homotopy move, called {\it{weak (1, 3) homotopy}}.  In particular, we determine which knot projections are trivialized under weak (1, 3) homotopy (Corollary \ref{trivialCondition}).

At the end of this section, we discuss the remarkable work of Hagge and Yazinski \cite{HY} with regard to this paper.  We investigate the equivalence relation of knot projections by the first and third homotopy moves; i.e. {\it{(1, 3) homotopy}}.  Hagge and Yazinski \cite{HY} studied the non-triviality of (1, 3) homotopy classes of knot projections.  However, we still do not have any numerical invariants for all knot projections that exhibit the non-triviality of knot projections under (1, 3) homotopy.  

\section{(1, 2) homotopy classes of knot projections}
We define {\it{(1, 2) homotopy}} as the equivalence relation generated by $H_1$ and $H_2$ moves for all knot projections.  In this section, we determine how to detect two knot projections under (1, 2) homotopy (Theorem \ref{12theorem}).  The $H_1$ (resp. $H_2$) move consists of generating and removing a $1$-gon (resp. $2$-gon) as shown in Fig.~\ref{taki_reide1}.  Every generation (resp. removal) of a $1$-gon is called a $1a$ move or denoted by $1a$ (resp. $1b$ move or $1b$), and every generation (resp. removal) of a $2$-gon is called a $2a$ move or denoted by $2a$ (resp. $2b$ move or $2b$), as in Fig.~\ref{taki_reide1}.  
\begin{figure}[h]
\begin{picture}(0,0)
\put(30,35){$1b$}
\put(15,123){$1a$}
\put(125,123){$2a$}
\put(125,35){$2b$}
\end{picture}
\includegraphics[width=6cm]{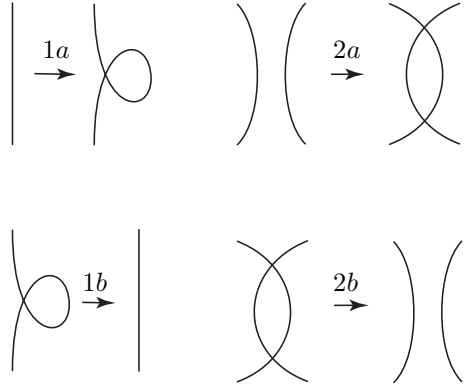}
\caption{Our conventions $1a$, $1b$, $2a$, and $2b$.  }\label{taki_reide1}
\end{figure}
\begin{definition}\label{1reduced}
For an arbitrary knot projection $P$ having $n$ individual $1$-gons and $2$-gons, the {\it{reduced projection}} $P^{r}$ is the knot projection obtained through any sequences of $1b$ and $2b$ deleting $n$ individual $1$-gons and $2$-gons arbitrarily.  

We consider two special cases of this definition to define {\it{1-homotopy}} (resp. {\it{2-homotopy}}) as the equivalence relation generated by $H_1$ (resp. $H_2$) moves for all knot projections.  For an arbitrary knot projection $P$ having $n$ individual $1$-gons (resp. $2$-gons), the {\it{reduced projection}} $P^{1r}$ (resp. $P^{2r}$) is the knot projection obtained through any sequences of $1b$ (resp. $2b$) deleting $n$ individual of $1$-gons (resp. $2$-gons).
\end{definition}
Definition \ref{1reduced} looks like that the definition depends on the way of deleting $1$- and $2$-gons (Fig.~\ref{takimuex}).  However, the reduced knot projection does not depend on the way of deleting $1$- and $2$-gons.  In other words, the uniqueness of the reduced knot projection is well defined by Theorem \ref{12theorem} (Corollary \ref{12uniqueness}).  
\begin{figure}[h]
\begin{picture}(0,0)
\put(28,-10){$P_2$}
\put(28,70){$P_1$}
\put(113,70){${P_1}^{r}$}
\put(113,-10){${P_2}^{r}$}
\end{picture}
\includegraphics[width=5cm]{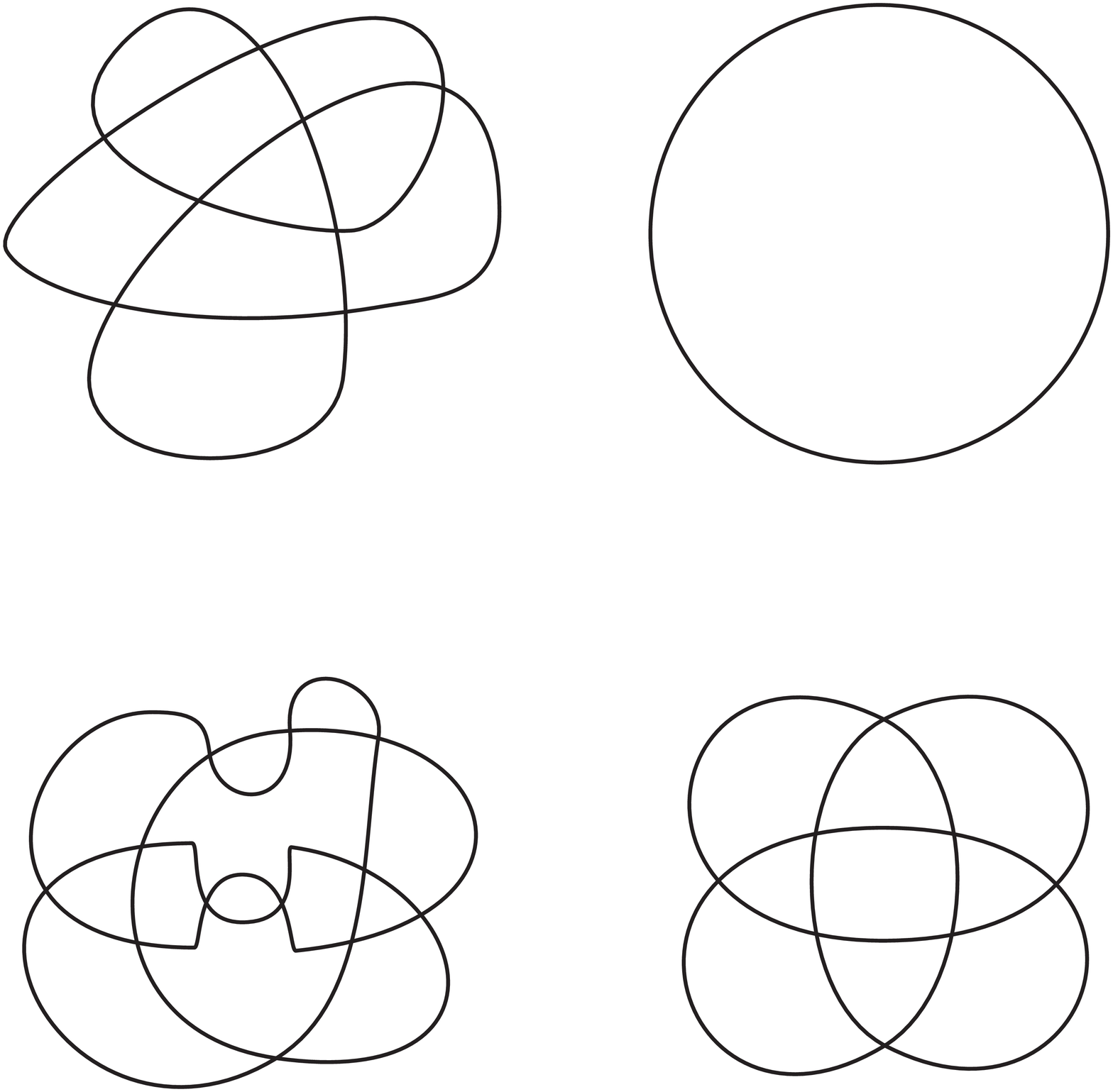}
\caption{Examples of knot projections and their reduced knot projections (by using Theorem \ref{12theorem}, ${P_1}^{r} \neq {P_2}^{r}$ under (1, 2) homotopy).}\label{takimuex}
\end{figure}
\begin{theorem}\label{12theorem}
\noindent
\begin{enumerate}
\item Two knot projections $P$ and $P'$ are equivalent under 1-homotopy if and only if $P^{1r}$ and ${P'}^{1r}$ are equivalent under isotopy on $S^{2}$.  \label{1stStatement}
\item Two knot projections $P$ and $P'$ are equivalent under 2-homotopy if and only if $P^{2r}$ and ${P'}^{2r}$ are equivalent under isotopy on $S^{2}$.  \label{2ndStatement}
\item Two arbitrary knot projections $P$ and $P'$ are equivalent under (1, 2) homotopy if and only if ${P}^{r}$ and ${P'}^{r}$ are equivalent under isotopy on $S^{2}$.  \label{3rdStatement}
\end{enumerate}
\end{theorem}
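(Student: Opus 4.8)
The plan is to prove all three statements by the same mechanism: the reduced projection is a complete invariant of the equivalence relation in question, where the relevant ``reduction'' consists of deletions by $1b$ only (for (i)), by $2b$ only (for (ii)), or by $1b$ and $2b$ together (for (iii)). Write $P^{r}$ for such a reduced projection of $P$ (meaning $P^{1r}$, $P^{2r}$, or $P^{r}$, respectively). The ``if'' direction of each statement is then immediate: a knot projection is joined to its reduced projection by a sequence of reduction moves by definition, so from an $S^{2}$-isotopy $P^{r} \cong {P'}^{r}$ one builds the required homotopy by concatenating the reduction of $P$, that isotopy, and the reverse of the reduction of $P'$ (a sequence of $1a$ and/or $2a$ moves). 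All the content is in the ``only if'' direction, and for this it suffices to establish: (A) the reduced projection of a fixed $P$ is independent, up to $S^{2}$-isotopy, of the order of deletions; and (B) the reduced projection is unchanged, up to $S^{2}$-isotopy, by a single $H_{1}$ or $H_{2}$ move.

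For (A) I would argue by a confluence (diamond) argument. Each of $1b$ and $2b$ strictly lowers the number of double points, so every sequence of reduction moves terminates; hence, by Newman's lemma, it is enough to prove local confluence: if $P$ is carried by one deletion to $Q_{1}$ and by another to $Q_{2}$, then $Q_{1}$ and $Q_{2}$ admit $S^{2}$-isotopic results after further deletions. I would split into cases by how the two deleted gons meet. If they coincide there is nothing to do. If the closed regions they bound are disjoint, the deletions have disjoint supporting discs, so deleting the other gon from each of $Q_{1}$ and $Q_{2}$ gives the same projection. The remaining case, where the two gons share a crossing or an edge, is where the real work lies: on $S^{2}$ only finitely many local pictures occur, and each is checked directly. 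For statement (i) this is in fact quick, since two distinct $1$-gons with distinct base crossings necessarily bound disjoint discs, while two $1$-gons at a common base crossing use up all four local strands there and so force the whole projection to be the figure-eight curve on $S^{2}$, which either $1b$ reduces to the simple closed curve. For (ii) and (iii) one must in addition enumerate the ways a bigon can share a crossing or an edge with a $1$-gon or with another bigon, bearing in mind that a $2b$ or $1b$ move can \emph{create} new $1$-gons and bigons (for example, deleting one bigon of the standard trefoil projection produces the figure-eight, which reduces further), so the common reduct may be reached only after several moves on each side; termination ensures this search ends. Newman's lemma then gives a well-defined reduced projection up to $S^{2}$-isotopy, which in particular proves Corollary \ref{12uniqueness}.

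Given (A), step (B) is purely formal. Suppose $P$ and $P'$ differ by one homotopy move. If that move is a $1b$ or $2b$ move, then $P'$ is one deletion below $P$, so $P \rightsquigarrow P' \rightsquigarrow {P'}^{r}$ and $P \rightsquigarrow P^{r}$ are two reduction sequences out of $P$ ending at projections with no deletable gons; by confluence they agree up to $S^{2}$-isotopy. If the move is a $1a$ or $2a$ move, the same applies with the roles of $P$ and $P'$ interchanged. Chaining this along a sequence of $H_{1}$ and $H_{2}$ moves realizing the equivalence of $P$ and $P'$ yields that $P^{r}$ and ${P'}^{r}$ are $S^{2}$-isotopic, which is the ``only if'' direction. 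Taking the reduction to be by $1b$ alone proves (i); by $2b$ alone, (ii); and by $1b$ and $2b$ together, (iii).

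I expect the main obstacle to be step (A), and within it the local-confluence case analysis when two deletable gons overlap: one must be sure the list of local configurations of a bigon meeting a $1$-gon or another bigon on $S^{2}$ is complete, and one must keep careful account of the $1$-gons and bigons created by a deletion in order to exhibit a common reduct. The pure $1$-gon case is easy for the reason indicated, so the difficulty is concentrated in the bigon and mixed $1$-gon/bigon interactions that enter in (ii) and (iii).
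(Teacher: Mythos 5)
Your architecture is sound and, at bottom, contains the same mathematics as the paper's, but it is packaged differently. The paper does not invoke Newman's lemma on the deletion system; instead it proves a normalization statement about \emph{sequences of moves} (Lemma~\ref{12lemma}): starting from a reduced projection, the first $1b$ or $2b$ move in any sequence can be pushed leftward, step by step, until it cancels against an earlier $1a$ or $2a$ move, so every sequence between a reduced projection and anything else can be made monotone; applied to a sequence joining $P^{r}$ to ${P'}^{r}$ (both reduced), monotonicity forces the sequence to be empty. The engine of that argument is a four-case analysis of a creation followed by a deletion --- $x(1a)(1b)$, $x(2a)(1b)$, $x(1a)(2b)$, $x(2a)(2b)$ --- split according to whether the created gon $\delta_x$ and the deleted gon $\delta_y$ intersect. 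Read in reverse, this is exactly your local-confluence diamond for two deletions out of the same projection: disjoint gons commute, intersecting gons either cancel outright or resolve after one more move. So your route buys a cleaner global structure (termination is trivial, Corollary~\ref{12uniqueness} falls out for free rather than as a consequence of the theorem, and the common reduct is allowed to be several steps away --- your trefoil-to-figure-eight remark), while the paper's route proves a slightly stronger one-step resolution and avoids naming any rewriting machinery.

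The caveat is that your proposal defers precisely the part that carries the content. Your step (A) for the mixed and bigon--bigon interactions --- a bigon sharing a vertex or an edge with a $1$-gon or with another bigon, where deleting one gon removes a crossing of the other and turns it into a different kind of face --- is exactly what the paper's Cases 2--4(i) establish by explicit figures (e.g., $x(2a)(1b)=x(1a)$ and $x(1a)(2b)=x(1b)$: the two deletions differ by a single residual $1$-gon deletion). Until those finitely many configurations are enumerated and checked, the proof is a correct skeleton rather than a proof; your pure $1$-gon analysis (disjoint closures for distinct base crossings, the figure-eight curve when the base crossings coincide) is right and settles statement (i), but (ii) and (iii) still rest on the unexecuted enumeration.
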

We will obtain the proof of (\ref{3rdStatement}) which includes the proofs of (\ref{1stStatement}) and (\ref{2ndStatement}).  As shown below, Theorem \ref{12theorem} is derived from Lemma \ref{12lemma}.  
\begin{lemma}\label{12lemma}
Any finite sequence generated by $H_1$ and $H_2$ moves between an arbitrary knot projection and an arbitrary reduced knot projection can be replaced with a sequence of only $1a$ and $2a$ moves or only $1b$ and $2b$ moves.  
\end{lemma}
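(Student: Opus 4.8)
The plan is to reduce Lemma~\ref{12lemma} to a local analysis of how a gon-generating move interacts with a gon-removing move that follows it immediately, and then to let the hypothesis ``one of the two endpoints is reduced'' drive an induction that removes all generating moves. Throughout I will call $1a$ and $2a$ \emph{generations} and $1b$ and $2b$ \emph{reductions}. Reversing a sequence of homotopy moves interchanges generations and reductions, so it suffices to treat a sequence $P = P_0 \to P_1 \to \cdots \to P_k = Q$ in which the final projection $Q$ is reduced and to show it can be rewritten using only reductions; applying this to the reversed sequence produces the ``only $1a$ and $2a$'' alternative of the statement. I will run the argument by induction on a complexity of the sequence, for instance on the pair consisting of its length and of $k$ minus the position of its last generation, ordered lexicographically.

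The heart of the proof will be a local claim about three consecutive projections $X \xrightarrow{a} Y \xrightarrow{b} Z$, where $a$ is a generation creating a $1$- or $2$-gon $g$ and $b$ is a reduction removing a $1$- or $2$-gon $g'$. The claim is that, up to isotopy on $S^2$, the two-step sequence $X \to Y \to Z$ can be replaced by one of the following: the empty sequence, when $g' = g$, so that $Z$ is isotopic to $X$; a single reduction or a single generation, in the controlled overlap cases (the candidates here being $[1a][2b]$ and $[2a][1b]$, where the net change in the number of crossings is $\pm 1$); or a reordered pair $X \xrightarrow{b'} W \xrightarrow{a'} Z$ in which a reduction is performed first, whenever the supports of the two moves are disjoint or meet only in a way that permits commuting. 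This will be proved by enumerating the finitely many ways the disk supporting a $1a$ or $2a$ move can be situated relative to the $1$- or $2$-gon destroyed by the next move, keeping careful track of which crossings and which complementary regions are shared. I expect this enumeration to be the main obstacle: in the overlapping cases one must verify simultaneously that the reordering yields the same knot projection up to isotopy and that it introduces no new generation (so that the complexity cannot increase).

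Granting the local claim, the induction is short. If the sequence contains no generation, there is nothing to prove. Otherwise consider its last generation move; every subsequent move is a reduction, and this generation cannot be the final move of the sequence, since then $Q$ would contain the gon it creates, contradicting that $Q$ is reduced. Hence the last generation is followed immediately by a reduction, and the local claim applies to that adjacent pair: the ``empty sequence'' and ``single move'' outcomes strictly decrease the length without increasing the number of generations, while the ``reordering'' outcome keeps the length fixed but pushes the last generation one step to the right. Since the last generation can never be pushed past the end of the sequence, finitely many applications must reach a length-decreasing outcome, the complexity drops, and the inductive hypothesis finishes the argument. Specializing to $Q = P^{r}$ — and, for the first two statements, running the same argument restricted to $H_1$-only, respectively $H_2$-only, sequences with $Q = P^{1r}$, respectively $Q = P^{2r}$ — then yields the three cases of Theorem~\ref{12theorem}.
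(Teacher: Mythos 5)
Your proposal is essentially the paper's own argument run from the opposite end: the paper pushes the first reduction leftward toward the reduced projection via the same four local interaction cases ($1a$ or $2a$ followed by $1b$ or $2b$, split by whether the two gons meet), with the same outcomes (cancellation, merging into a single move as in $x(2a)(1b)=x(1a)$ and $x(1a)(2b)=x(1b)$, or commutation when the supports are disjoint), and terminates by the same observation that the offending move cannot reach the reduced endpoint. The local enumeration you defer is exactly the content of the paper's Cases 1--4, so your plan is correct and structurally identical.
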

\begin{proof}
Let $n$ be an arbitrary integer, with $n \ge 2$, and let $x$ be a sequence of $n-2$ moves consisting of $1a$ and $2a$.  We use the convention that the sequence $x$ followed by a $1a$ move is denoted by $x(1a)$.  For the other moves (e.g. $1b$, $2a$, or $(2a)(1b)$), the same convention is applied.  Let $P_i$ be the $i$th knot projection appearing in a sequence of $H_1$ and $H_2$ moves of length $n$.  In the discussion below, we often use the symbol $Q$, which stands for a knot projection.  We also use the convention that if the sequence $x(1a)(1b)$ can be replaced with $x$, we denote this by $x(1a)(1b)$ $=$ $x$.  We apply the same convention to all similar cases that appear below.  

Below we make claims about four cases of the first appearance of $1b$ or $2b$ in the sequence $P_1$ $\to$ $P_2$ $\to \dots \to$ $P_n$ $\to$ $P_{n+1}$ of $H_1$ and $H_2$ moves.  
\begin{itemize}
\item Case 1: $x(1a)(1b)$ $=$ $x$ or $x(1b)(1a)$.  
\item Case 2: $x(2a)(1b)$ $=$ $x(1a)$ or $x(1b)(2a)$.  
\item Case 3: $x(1a)(2b)$ $=$ $x(1b)$ or $x(2b)(1a)$.  
\item Case 4: $x(2a)(2b)$ $=$ $x$ or $x(2b)(2a)$.  
\end{itemize}

Case 1: The last two moves $(1a)(1b)$ can be presented as in Fig.~\ref{taki1-1}.  The symbols $\delta_x$ and $\delta_y$ denote $1$-gons with boundaries, as shown in Fig.~\ref{taki1-1}.  
\begin{figure}[h]
\begin{picture}(0,0)
\put(45,16){\tiny $\delta_x$}
\put(45,73){\tiny $\delta_y$}
\put(20,25){$1a$}
\put(65,80){$1b$}
\end{picture}
\includegraphics[width=3cm]{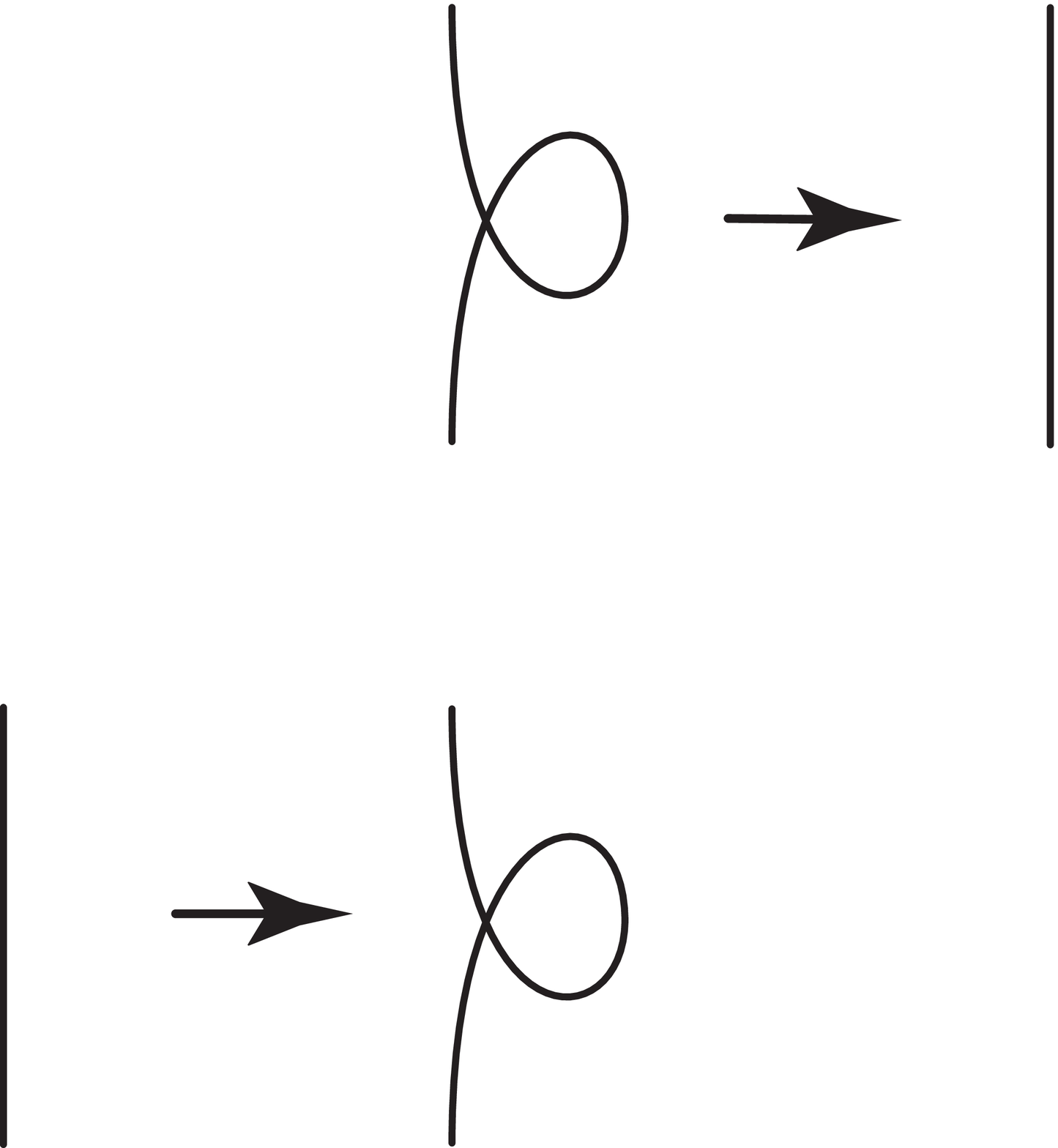}
\caption{The last two moves of $x(1a)(1b)$ of Case 1.}\label{taki1-1}
\end{figure}
\begin{enumerate}
\item If $\delta_x$ $\cap$ $\delta_y$ $\neq$ $\emptyset$, there are two cases of the pair $\delta_x$ and $\delta_y$ as shown in Fig.~\ref{taki1-2}.    
\begin{figure}[h]
\includegraphics[width=3cm]{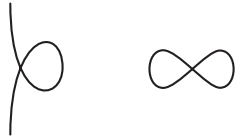}
\caption{Case 1--(i).  The case $\delta_x$ $=$ $\delta_y$ (left) and the case $\delta_x$ $\cap$ $\delta_y$ $=$ $\{{\text{one vertex}}\}$ (right).}\label{taki1-2}
\end{figure}
In both of these cases, we have $x(1a)(1b)$ $=$ $x$ by Fig.~\ref{taki1-3}.
\begin{figure}[h]
\includegraphics[width=3cm]{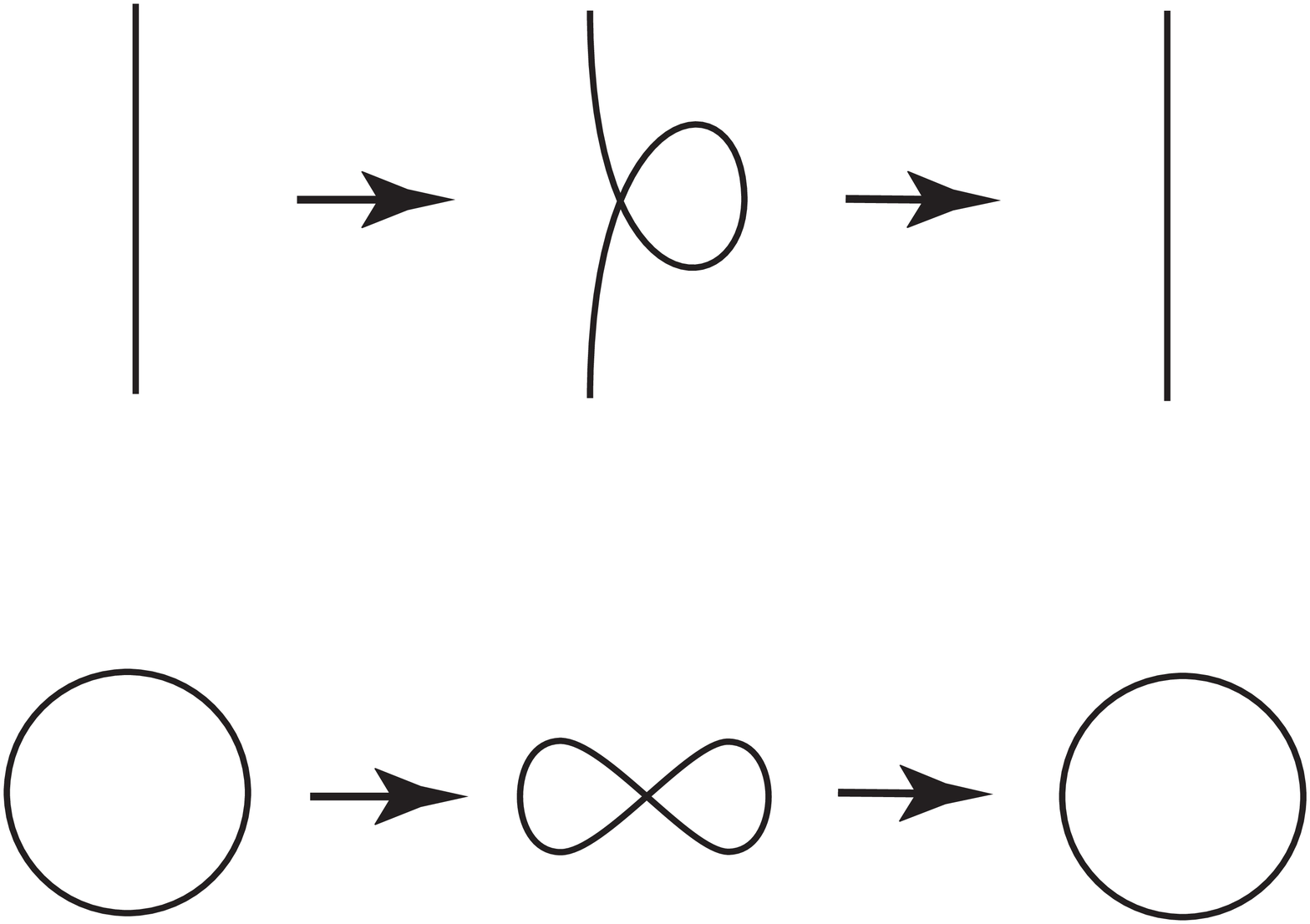}
\caption{Case 1--(i).  The sequence $x(1a)(1b)$ $=$ $x$.}\label{taki1-3}
\end{figure}
\item If $\delta_x$ $\cap$ $\delta_y$ $=$ $\emptyset$, then Fig.~\ref{taki1-4} implies $x(1a)(1b)(1b)$ $=$ $x(1b)$.  Here, we can find the special move $1b$, corresponding to the inverse move of given $1a$.  The move $1a$ follows this sequence, and we have $x(1a)(1b)(1b)(1a)$ $=$ $x(1b)(1a)$ as in Fig.~\ref{taki1-4}.  Therefore, $x(1a)(1b)$ $=$ $x(1b)(1a)$.  
\begin{figure}[h]
\begin{picture}(0,0)
\put(0,79){$P_{n-1}$}
\put(17,87){\tiny ${1a}$}
\put(30,79){$P_n$}
\put(50,87){\tiny ${1b}$}
\put(60,79){$P_{n+1}$}
\put(87,87){\tiny $1b$}
\put(105,79){$Q$}
\put(120,87){\tiny $1a$}
\put(130,79){$P_{n+1}$}
\put(50,110){\tiny $1b$}
\end{picture}
\includegraphics[width=5cm]{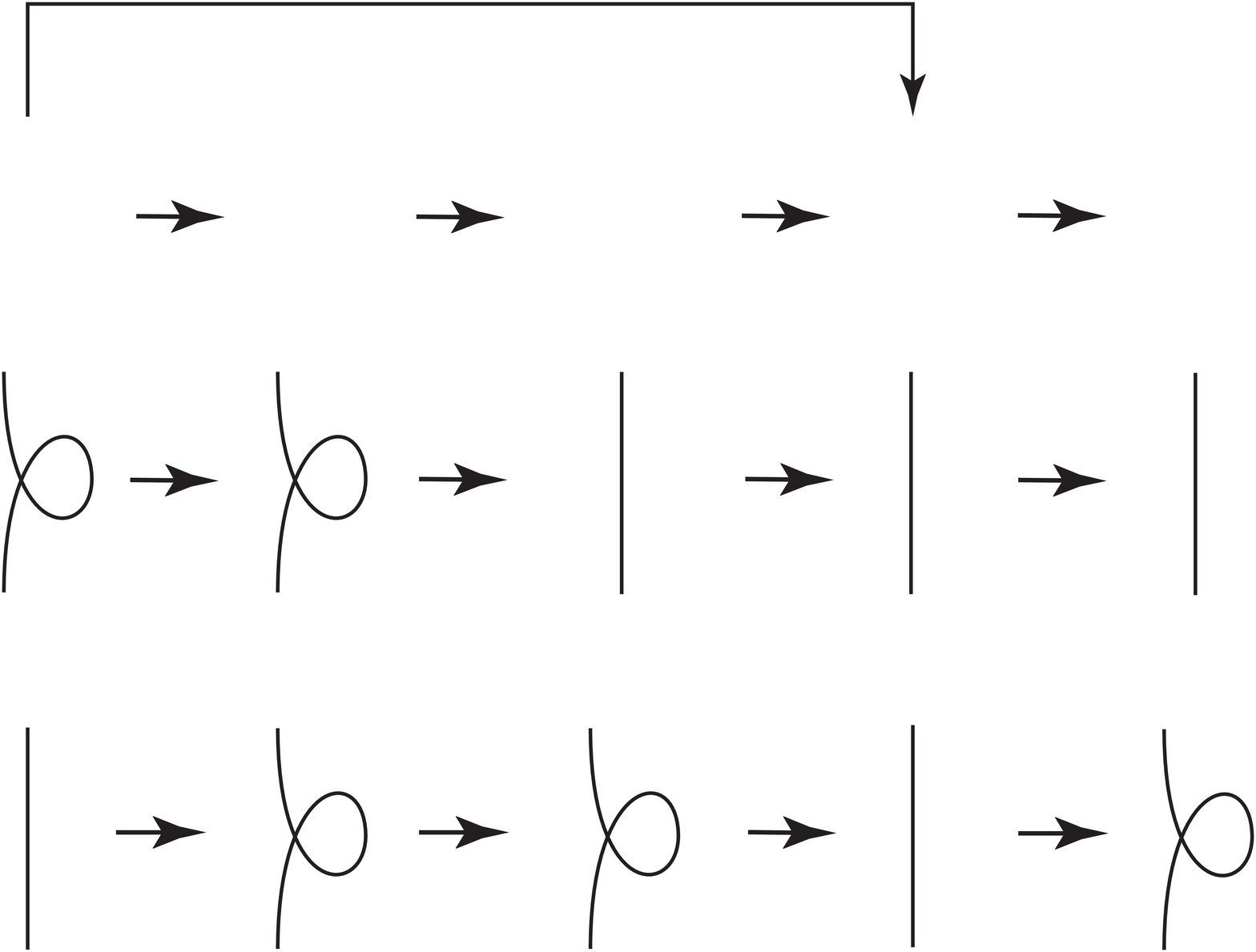}
\caption{Case 1--(ii).  The sequence $P_{n-1}$ $\stackrel{1a}{\to}$ $P_{n}$ $\stackrel{1b}{\to}$ $P_{n+1}$ $\stackrel{1b}{\to}$ $Q$ shows that $x(1a)(1b)(1b)$ $=$ $x(1b)$.}\label{taki1-4}
\end{figure}
\end{enumerate}

Case 2: The last two moves $(2a)(1b)$ can be presented as in Fig.~\ref{taki2-1}.  The symbol $\delta_x$ (resp. $\delta_y$) denotes a $2$-gon (resp. $1$-gon) with a boundary, as shown in Fig.~\ref{taki2-1}.  
\begin{figure}[h]
\begin{picture}(0,0)
\put(46,16){\tiny $\delta_x$}
\put(49,57){\tiny $\delta_y$}
\put(22,22){$2a$}
\put(65,65){$1b$}
\end{picture}
\includegraphics[width=3cm]{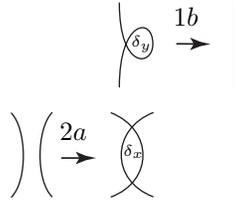}
\caption{The last two moves of $x(2a)(1b)$ of Case 2.}\label{taki2-1}
\end{figure}
\begin{enumerate}
\item If $\delta_x$ $\cap$ $\delta_y$ $\neq$ $\emptyset$, the pair $\delta_x$ and $\delta_y$ appears as in Fig.~\ref{taki2-2}.  
\begin{figure}[h]
\begin{picture}(0,0)
\put(9,18){\tiny $\delta_x$}
\put(23,19){\tiny $\delta_y$}
\end{picture}
\includegraphics[width=1cm]{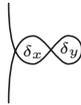}
\caption{Case 2-(i).  }\label{taki2-2}
\end{figure}
In this case, we have $x(2a)(1b)$ $=$ $x(1a)$ by Fig.~\ref{taki2-3}.  
\begin{figure}[h]
\includegraphics[width=3cm]{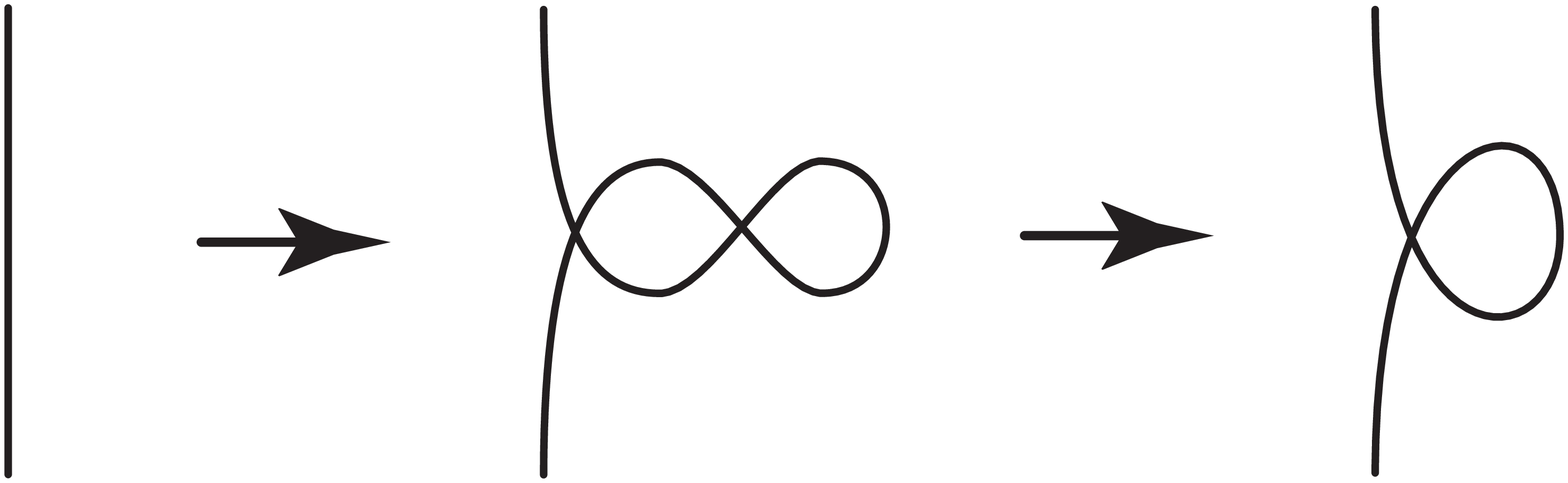}
\caption{Case 2-(i).  The sequence $x(2a)(1b)$ $=$ $x(1a)$.}\label{taki2-3}
\end{figure}
\item If $\delta_x$ $\cap$ $\delta_y$ $=$ $\emptyset$, then Fig.~\ref{taki2-4} implies that $x(2a)(1b)(2b)$ $=$ $x(1b)$.  Here, we can find the special move $2b$, corresponding to the inverse move of given $2a$.  The move $2a$ follows this sequence, and we have $x(2a)(1b)(2b)(2a)$ $=$ $x(1b)(2a)$ as in Fig.~\ref{taki2-4}.  Therefore, $x(2a)(1b)$ $=$ $x(1b)(2a)$.  
\begin{figure}[h]
\begin{picture}(0,0)
\put(0,59){$P_{n-1}$}
\put(18,67){\tiny $2a$}
\put(30,59){$P_n$}
\put(47,67){\tiny $1b$}
\put(60,59){$P_{n+1}$}
\put(80,67){\tiny $2b$}
\put(93,59){$Q$}
\put(107,67){\tiny $2a$}
\put(120,59){$P_{n+1}$}
\put(50,90){\tiny $1b$}
\end{picture}
\includegraphics[width=4.5cm]{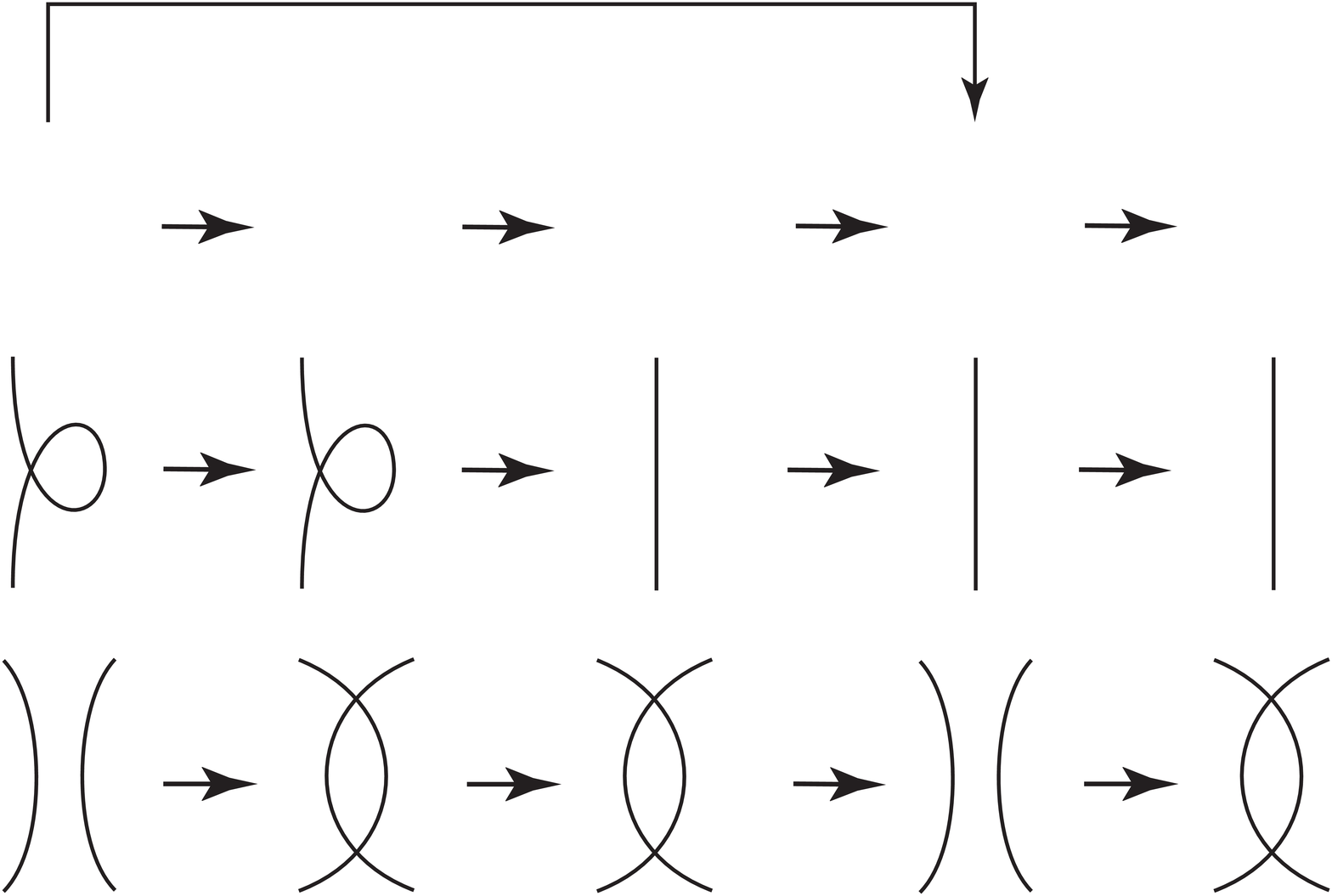}
\caption{Case 2-(ii).  The sequence $P_{n-1}$ $\stackrel{2a}{\to}$ $P_{n}$ $\stackrel{1b}{\to}$ $P_{n+1}$ $\stackrel{2b}{\to}$ $Q$ shows that $x(2a)(1b)(2b)$ $=$ $x(1b)$.}\label{taki2-4}
\end{figure}
\end{enumerate}

Case 3: The last two moves $(1a)(2b)$ can be presented as in Fig.~\ref{taki3-1}.  The $\delta_x$ (resp. $\delta_y$) denotes a $1$-gon ($2$-gon) with a boundary, as shown in Fig.~\ref{taki3-1}.  
\begin{figure}[h]
\begin{picture}(0,0)
\put(41,15){\tiny $\delta_x$}
\put(37,57){\tiny $\delta_y$}
\put(16,23){$1a$}
\put(57,63){$2b$}
\end{picture}
\includegraphics[width=3cm]{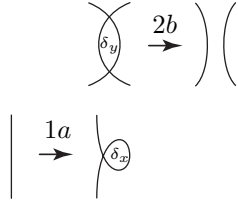}
\caption{The last two moves of $x(1a)(2b)$ of Case 3.}\label{taki3-1}
\end{figure}
\begin{enumerate}
\item If $\delta_x$ $\cap$ $\delta_y$ $\neq$ $\emptyset$, the pair $\delta_x$ and $\delta_y$ appears as in Fig.~\ref{taki3-2}.    
\begin{figure}[h]
\begin{picture}(0,0)
\put(9,19){\tiny $\delta_y$}
\put(23,18){\tiny $\delta_x$}
\end{picture}
\includegraphics[width=1cm]{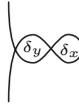}
\caption{Case 3-(i).}\label{taki3-2}
\end{figure}
In this case, $x(1a)(2b)$ $=$ $x(1b)$ by Fig.~\ref{taki3-3}.
\begin{figure}[h]
\includegraphics[width=3cm]{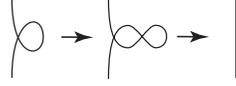}
\caption{Case 3-(i).  The sequence $x(1a)(2b)$ $=$ $x(1b)$.  }\label{taki3-3}
\end{figure}
\item If $\delta_x$ $\cap$ $\delta_y$ $=$ $\emptyset$, then Fig.~\ref{taki3-4} implies that $x(1a)(2b)(1b)$ $=$ $x(2b)$.  Here, we can find the special move $1b$, corresponding to the inverse move of given $1a$.  The move $1a$ follows this sequence, and we have $x(1a)(2b)(1b)(1a)$ $=$ $x(2b)(1a)$ as in Fig.~\ref{taki3-4}.  Therefore, $x(1a)(2b)$ $=$ $x(2b)(1a)$.  
\begin{figure}[h]
\begin{picture}(0,0)
\put(-4,63){$P_{n-1}$}
\put(17,69){\tiny $1a$}
\put(30,63){$P_n$}
\put(48,69){\tiny $2b$}
\put(58,63){$P_{n+1}$}
\put(79,69){\tiny $1b$}
\put(92,63){$Q$}
\put(107,69){\tiny $1a$}
\put(122,63){$P_{n+1}$}
\put(50,88){\tiny $2b$}
\end{picture}
\includegraphics[width=4.5cm]{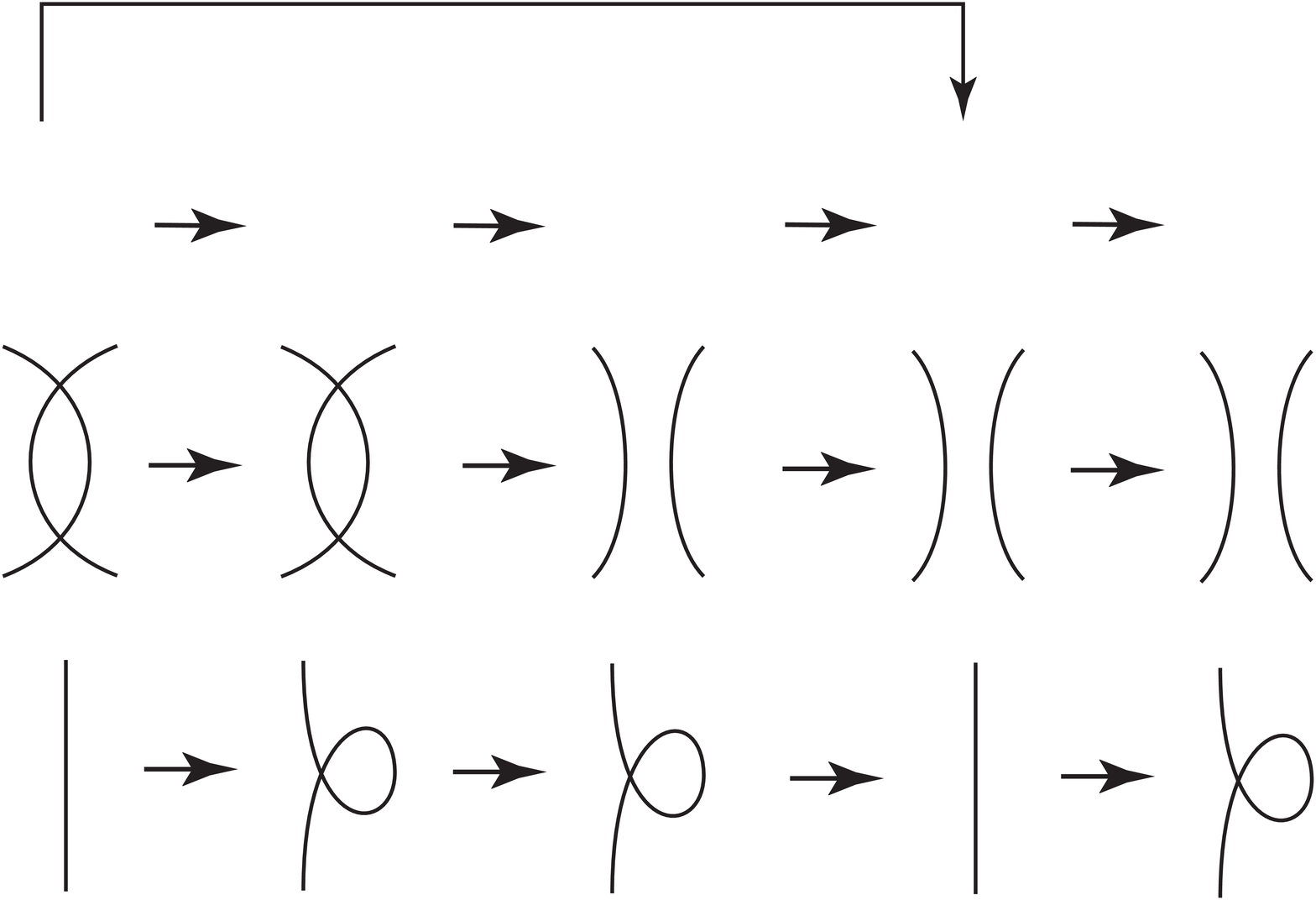}
\caption{Case 3-(ii).  The sequence $P_{n-1}$ $\stackrel{1a}{\to}$ $P_{n}$ $\stackrel{2b}{\to}$ $P_{n+1}$ $\stackrel{1b}{\to}$ $Q$ shows that $x(1a)(2b)(1b)$ $=$ $x(2b)$.}\label{taki3-4}
\end{figure}
\end{enumerate}

Case 4: The last two moves $(2a)(2b)$ can be presented as in Fig.~\ref{taki4-1}.  The symbols $\delta_x$ and $\delta_y$ denote $2$-gons with boundaries, as shown in Fig.~\ref{taki4-1}.  
\begin{figure}[h]
\begin{picture}(0,0)
\put(43,16){\tiny $\delta_x$}
\put(43,60){\tiny $\delta_y$}
\put(23,20){$2a$}
\put(58,65){$2b$}
\end{picture}
\includegraphics[width=3cm]{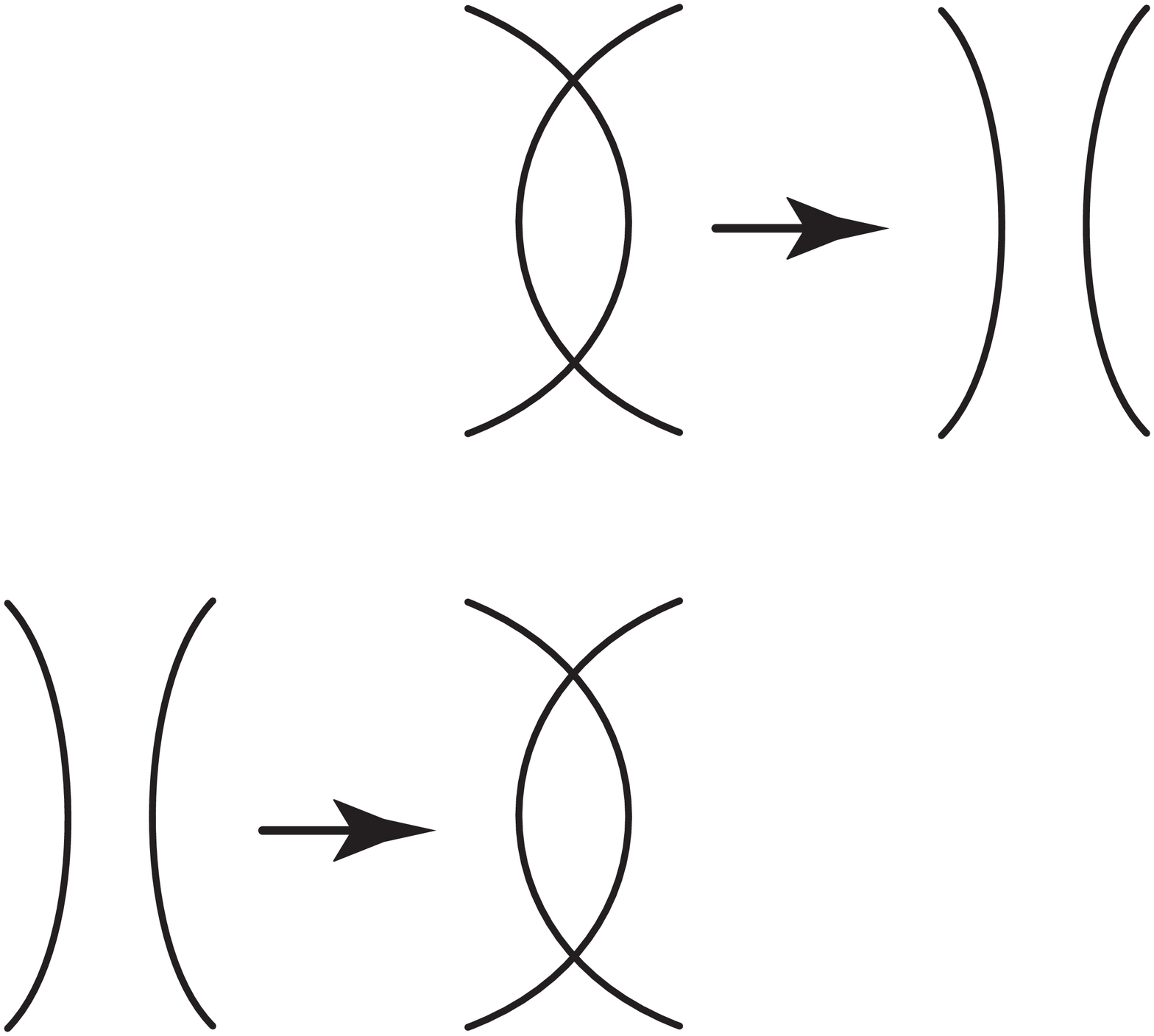}
\caption{The last two moves of $x(2a)(2b)$ of Case 4.}\label{taki4-1}
\end{figure}
\begin{enumerate}
\item If $\delta_x$ $\cap$ $\delta_y$ $\neq$ $\emptyset$, there are two cases of the pair $\delta_x$ and $\delta_y$, as shown in Fig.~\ref{taki4-2}.    
\begin{figure}[h]
\begin{picture}(0,0)
\put(58,16){\tiny $\delta_x$}
\put(73,16){\tiny $\delta_y$}
\end{picture}
\includegraphics[width=3cm]{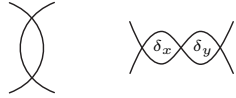}
\caption{Case 4-(i).  The case $\delta_x$ $=$ $\delta_y$ (left) and the case $\delta_x$ $\cap$ $\delta_y$ $=$ $\{{\text{one vertex}}\}$.}\label{taki4-2}
\end{figure}
In both of these cases, we have $x(2a)(2b)$ $=$ $x$ as in Fig.~\ref{taki4-3}. 
\begin{figure}[h]
\begin{picture}(0,0)
\put(64,8){$\delta_x$}
\put(81,8){$\delta_y$}
\end{picture}
\includegraphics[width=5cm]{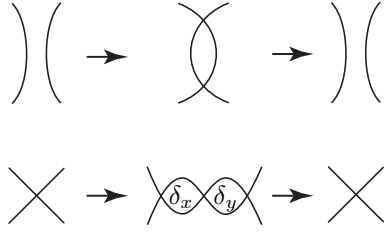}
\caption{Case 4-(i).  The sequence $x(2a)(2b)$ $=$ $x$.  }\label{taki4-3}
\end{figure}
\item If $\delta_x$ $\cap$ $\delta_y$ $=$ $\emptyset$, then Fig.~\ref{taki4-4} implies that $x(2a)(2b)(2b)$ $=$ $x(2b)$.  Here, we can find the special move $2b$, corresponding to the inverse move of given $2a$.  The move $2a$ follows this sequence, and we have $x(2a)(2b)(2b)(2a)$ $=$ $x(2b)(2a)$ as in Fig.~\ref{taki4-4}.  Therefore, $x(2a)(2b)$ $=$ $x(2b)(2a)$.  
\end{enumerate}
\begin{figure}[h]
\begin{picture}(0,0)
\put(-4,61){$P_{n-1}$}
\put(16,66){\tiny $2a$}
\put(30,61){$P_n$}
\put(48,66){\tiny $2b$}
\put(58,61){$P_{n+1}$}
\put(79,66){\tiny $2b$}
\put(93,61){$Q$}
\put(107,66){\tiny $2a$}
\put(118,61){$P_{n+1}$}
\put(50,87){\tiny $2b$}
\end{picture}
\includegraphics[width=4.5cm]{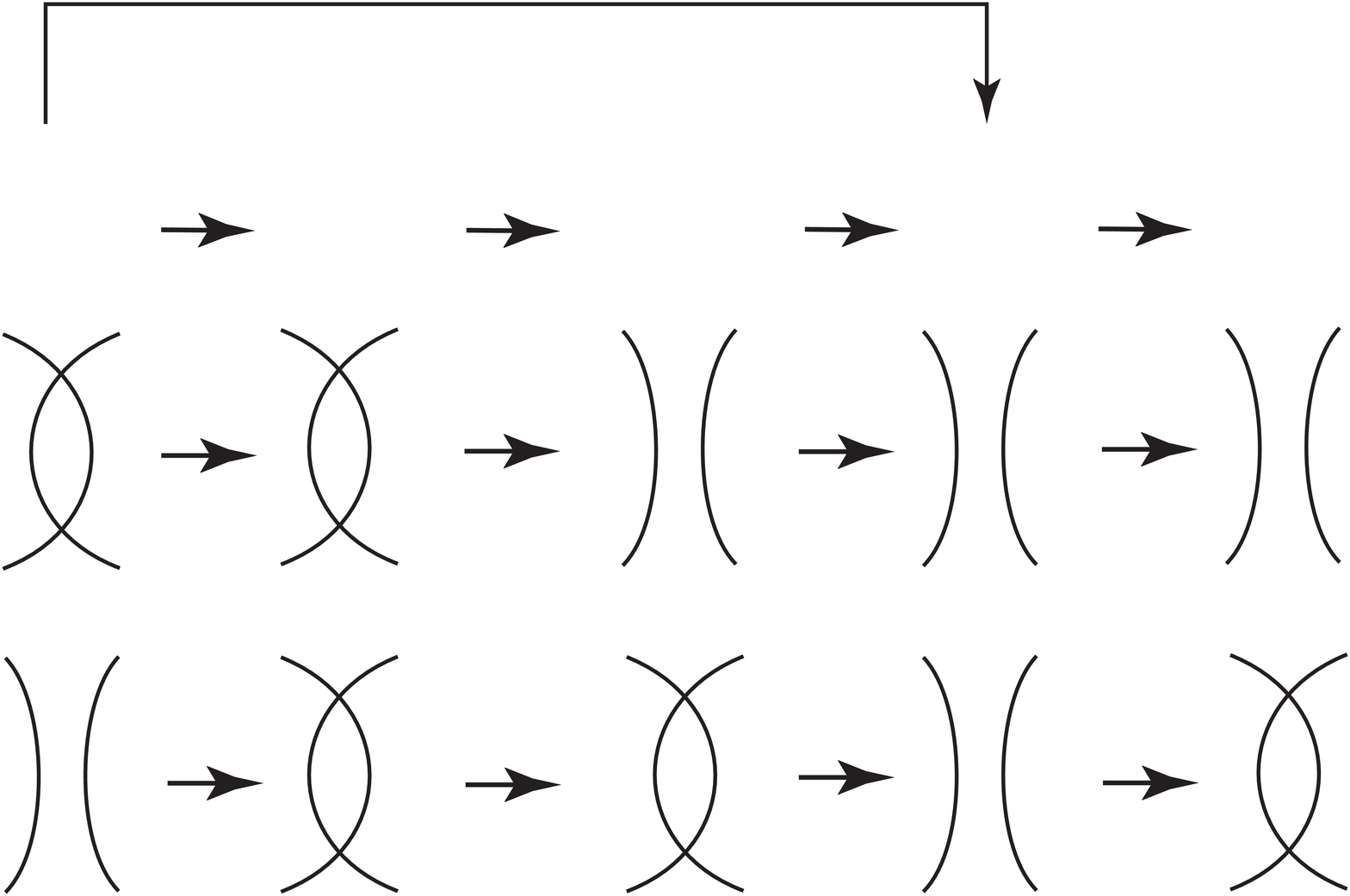}
\caption{Case 4-(ii).  The sequence $P_{n-1}$ $\stackrel{2a}{\to}$ $P_n$ $\stackrel{2b}{\to}$ $P_{n+1}$ $\stackrel{2b}{\to}$ $Q$ shows that $x(2a)(2b)(2b)$ $=$ $x(2b)$.}\label{taki4-4}
\end{figure}

Thus, we have shown that the above claims about the four cases are true.  

Next, we show that the statement of Lemma \ref{12lemma} is true.  Let us consider a given sequence $s$ of $1a$, $1b$, $2a$, and $2b$ from the left with the given reduced knot projection.  Here, $1b$ and $2b$ are called $b$ moves.  We focus on the first appearance of any $b$ move, which is called the first $b$ move.  The first $b$ move cannot be the first move of the sequence $s$ since we start from the left with the reduced knot projection that does not have any $1$-gons and $2$-gons.  If the first $b$ move is $1b$, we use the discussions of Cases 1-(ii) and 2-(ii) and move to the left (if necessary) until the $b$ move encounters Case 1-(i) or 2-(i), either of which eliminates the $b$ moves.  Here, note that the $b$ move must encounter Case 1-(i) or 2-(i) because the $b$ move must not the first move of $s$.  If the first $b$ move is $2b$, we use the discussions of Cases 3-(ii) and 4-(ii) and move to the left (if necessary) until the $b$ move encounters Case 3-(i) or 4-(i), either of which eliminates the $2b$ move.  Cases 3-(i) or 4-(i) eliminate $2b$ and permit the replacement $y(2b)$ with $z(1b)$, where $y$ and $z$ are sequences entirely consisting of $1a$ and $2a$ moves.  However, $z(1b)$ belongs to Case 1 or 2, and so the $b$ move $1b$ is eliminated.  This completes the proof.  
\end{proof}

Now we will prove Theorem \ref{12theorem}.  
\begin{proof}
For two arbitrary knot projections $P$ and $P'$, we take the projections $P^{r}$ and ${P'}^{r}$ arbitrarily.  For $P^{r}$ and ${P'}^{r}$, we apply Lemma \ref{12lemma}.  If $P^{r}$ $\neq$ ${P'}^{r}$ under isotopy on $S^{2}$, there exists a non-empty sequence of only $1a$ and $2a$ moves from $P^{r}$ to ${P'}^{r}$ or from ${P'}^{r}$ to $P^{r}$.  However, neither $P^{r}$ nor ${P'}^{r}$ has any $1$-gon or $2$-gon.  This contradicts that the non-empty sequence consists of only $1a$ and $2a$ moves.  Then, the assumption that $P^{r}$ $\neq$ ${P'}^{r}$ under isotopy on $S^{2}$ is false.  Therefore, our claim is true.  The proof of the statement (\ref{1stStatement}) (resp. (\ref{2ndStatement})) of Theorem \ref{12theorem} is obtained by considering Case 1 (resp. Case 4) of Lemma \ref{12lemma}.  
\end{proof}
Theorem \ref{12theorem} implies Corollary \ref{12uniqueness}.  
\begin{corollary}\label{12uniqueness}
For an arbitrary knot projection $P$, the reduced knot projection $P^{r}$ is uniquely determined; that is, $P^{r}$ does not depend on the way in which $1$-gons and $2$-gons of $P$ are deleted.  
\end{corollary}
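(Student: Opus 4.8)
The plan is to deduce Corollary \ref{12uniqueness} directly from Theorem \ref{12theorem}(\ref{3rdStatement}), using nothing beyond the observation that a reduced knot projection coincides with its own reduction.

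First I would fix a knot projection $P$ and let $R$ and $R'$ be any two knot projections obtained from $P$ by two (possibly different) finite sequences of $1b$ and $2b$ moves that delete all $1$-gons and $2$-gons; thus, by definition of a reduced projection, neither $R$ nor $R'$ contains a $1$-gon or a $2$-gon, and each is a choice of $P^{r}$. Since $1b$ and $2b$ moves are particular instances of $H_1$ and $H_2$ moves, $R$ and $R'$ are each $(1,2)$ homotopic to $P$, hence $(1,2)$ homotopic to each other.

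Next I would record the self-reduction identity: a reduced knot projection has no $1$-gon and no $2$-gon, so the only sequence of $1b$ and $2b$ moves applicable to it is the empty one, whence $R^{r}=R$ and $(R')^{r}=R'$ as knot projections on $S^{2}$. Finally I would apply Theorem \ref{12theorem}(\ref{3rdStatement}) to the pair $R$, $R'$: since they are $(1,2)$ homotopic, their reduced projections $R^{r}$ and $(R')^{r}$ are isotopic on $S^{2}$, and by the previous remark this is exactly the statement that $R$ and $R'$ are isotopic on $S^{2}$. Therefore the reduced knot projection $P^{r}$ is determined up to isotopy on $S^{2}$, independently of the order in which $1$-gons and $2$-gons of $P$ are removed.

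I do not expect any essential obstacle here: all of the combinatorial content has already been absorbed into Lemma \ref{12lemma} and Theorem \ref{12theorem}, so the corollary is a short formal consequence. The only point that must be stated with care is the self-reduction identity $R^{r}=R$ for a reduced projection $R$, which is immediate from the absence of $1$- and $2$-gons; after that the argument is purely a matter of chaining the three equivalences together.
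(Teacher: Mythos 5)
Your proposal is correct and follows essentially the same route as the paper, which presents the corollary as an immediate formal consequence of Theorem \ref{12theorem}(\ref{3rdStatement}) without further argument. Your explicit use of the self-reduction identity $R^{r}=R$ for a reduced projection $R$, together with applying the theorem to the pair of reductions rather than to $P$ itself, is a clean way to make that implication precise.
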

\begin{remark}
Reduced projections having no $1$-gons and $2$-gons, produced by Definition \ref{1reduced}, are called {\it{lune-free graphs}} \cite{EHK, AST}.  The knot projection ${P_{2}}^{r}$ of Fig.~\ref{takimuex} appears in \cite{EHK, AST}.  
\end{remark}

\section{Positive resolutions and weak (1, 3) homotopy invariants.}\label{weakInvariant}

In the rest of this paper, unless otherwise specified, we adopt unoriented knot projections, and so the sphere containing knot projections does not have its orientation.  Moreover, by invoking isotopy on $S^{2}$, if necessary, we can assume without loss of generality that every double point of the knot projection $P$ consists of two orthogonal branches.  

In this section, we define a map from weak (1, 3) homotopy classes to knot isotopy classes.  Take an arbitrary knot projection $P$ and give $P$ any orientation.  Let us define {\it{crossings}} as double points of knot diagrams.  We replace the neighborhood of every double point by that of the crossing of knot diagrams as shown in Fig.~\ref{positiveResolution}.  
\begin{figure}[h!]
\includegraphics[width=8cm]{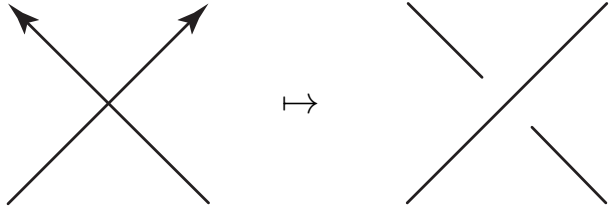}
\begin{picture}(0,0)
\put(-127,35){\LARGE$\mapsto$}
\end{picture}
\caption{Positive resolution.}\label{positiveResolution}
\end{figure}
This replacement does not depend on the orientation of $P$.  Then, the replacements define the map from knot projections to knot diagrams.  

Polyak \cite {polyak} introduced this map and called each replacement a {\it{positive resolution}} when every double point is regarded as a singular point.  That is, this map gives resolutions of singularities of double points.  Using this map, Polyak defined finite type invariants of plane curves.  We consider further applications.  

\begin{theorem}\label{theorem1}
For an arbitrary knot projection $P$, the positive resolution of all double points of $P$ defines the map from weak (1, 3) homotopy classes of knot projections to knot diagrams.   
\end{theorem}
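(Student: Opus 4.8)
The plan is to regard the positive resolution as an assignment of a knot diagram to each knot projection, to compose it with the passage to knot isotopy classes (diagrams modulo the Reidemeister moves $\Omega_1,\Omega_2,\Omega_3$), and to prove that the resulting assignment is constant on every weak $(1,3)$ homotopy class. Since weak $(1,3)$ homotopy is by definition generated by isotopy on $S^2$ together with one $H_1$ move and one weak $H_3$ move, and since isotopy on $S^2$ visibly commutes with positive resolution --- the resolution is performed locally and independently at each double point and, by the remark preceding Theorem \ref{theorem1}, does not depend on an orientation of $P$ --- it will be enough to treat a single $H_1$ move and a single weak $H_3$ move. In both moves the exterior of the affected region, and hence its resolution, is left unchanged, so in each case I only need to compare the two local tangle diagrams that positive resolution produces.

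For an $H_1$ move I would argue as follows. If $P'$ is obtained from $P$ by inserting a $1$-gon, then positive resolution turns the single new double point into a curl: a loop bounding a disk whose interior is disjoint from the rest of the diagram and which meets the rest of the diagram along one arc. Such a curl is removed by one $\Omega_1$ move, irrespective of its sign and of its location, so the positive resolution of $P'$ and that of $P$ differ by a single $\Omega_1$ move and represent the same knot.

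For a weak $H_3$ move I would rely on the following key step. In Fig.~\ref{weakPerestroika} both the ``before'' and the ``after'' local pictures of the weak move consist of three arcs bounding a triangle with three double points, and the move carries one to the other by pushing an arc across the opposite double point. Applying the positive resolution of Fig.~\ref{positiveResolution} at the three double points of one of the pictures, I would check that all three crossings become positive and that one of the three arcs then runs entirely over the other two while another runs entirely under both; that is, the resolved picture is a triangle on which an $\Omega_3$ move can be performed. Since an $\Omega_3$ move carries such a triangle to one of the same kind, preserves each of its three crossings (in particular their signs), and changes the underlying projection exactly by the move $H_3$ of Fig.~\ref{ProjReidemeister}, the $\Omega_3$ move applied here produces precisely the positive resolution of the other picture. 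Hence the positive resolutions of the two sides of the weak $H_3$ move differ by a single $\Omega_3$ move and represent the same knot. This is also where weakness matters: for the strong perestroika of Fig.~\ref{weakPerestroika} the analogous computation yields an alternating triangle, which admits no $\Omega_3$ move --- consistent with the exclusion of the strong move from weak $(1,3)$ homotopy. Together with the $H_1$ case, this establishes the map of Theorem \ref{theorem1}.

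The step I expect to be the main obstacle is this last verification: extracting from Fig.~\ref{weakPerestroika} and Fig.~\ref{positiveResolution} the over/under data at the three double points and confirming that it forms a genuine $\Omega_3$ configuration rather than an alternating one. Because positive resolution does not depend on how the arcs are oriented, this reduces to a short finite case analysis once the conventions of the two figures are fixed, but it is the place where the precise shape of the weak move, as opposed to the strong one, is used in an essential way.
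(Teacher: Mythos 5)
Your proposal is correct and follows essentially the same route as the paper: check that positive resolution carries each generating move ($H_1$ and the weak $H_3$ move) to a Reidemeister move ($\Omega_1$ and $\Omega_3$ respectively) on the resolved diagrams, so that the induced map to knot isotopy classes is constant on weak $(1,3)$ homotopy classes. Your added observation that the strong perestroika resolves to an alternating triangle admitting no $\Omega_3$ move is a useful sanity check the paper leaves implicit, but it does not change the argument.
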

\begin{proof}
Let us denote by $p$ the map defined by the positive resolutions of all double points of $P$ and sending knot projections to knot diagrams.  We will check the behavior of $p$ of the first and third homotopy moves.  
\begin{itemize}
\item $H_1$ moves.

We denote by $D_1$ (resp. $D_2$) the local diagram defined by the left (resp. right) side of the $H_1$ move in Fig.~\ref{ProjReidemeister}.  All possibilities for $D_2$ are shown in Fig.~\ref{firstMove}.  In every case, $p(D_2)$ is transferred to $p(D_1)$ by the first Reidemeister move $\Omega_1$ of Fig.~\ref{reidemeister}.  

\begin{figure}[h]
\begin{picture}(0,0)
\put(43,23){$\mapsto$}
\put(137,23){$\mapsto$}
\end{picture}
\includegraphics[width=6cm]{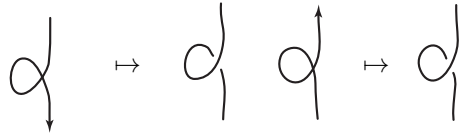}
\caption{The first homotopy move $H_1$ and positive resolutions.}\label{firstMove}
\end{figure}
\item Weak $H_3$ moves.  

We denote by $D_3$ (resp. $D_4$) the local diagram defined by the left (resp. right) side of the weak $H_3$ move in Fig.~\ref{weakPerestroika}.  As Fig.~\ref{thirdMove} shows, $p(D_3)$ is transferred to $p(D_4)$ by the third Reidemeister move $\Omega_3$ of Fig.~\ref{reidemeister}.  
\begin{figure}
\begin{picture}(0,0)
\put(60,35){$\mapsto$}
\put(220,35){$\mapsto$}
\end{picture}
\includegraphics[width=10cm, bb=0 0 400 100]{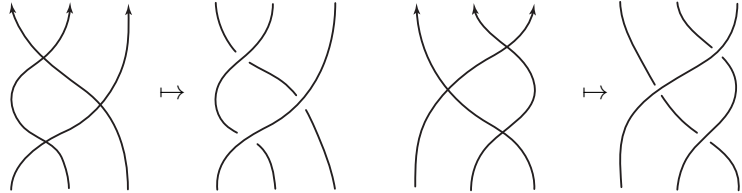}
\caption{The third homotopy move $H_3$ and positive resolutions.}\label{thirdMove}
\end{figure}
\end{itemize}
Then, an arbitrary representative of a weak (1, 3) homotopy class is sent by the map $p$ to an isotopy class of a knot.  This completes the proof.  
\end{proof}

In what follows, we permit the same symbol $p$ to denote the map defined by Theorem \ref{theorem1} from weak (1,3) homotopy classes of knot projections to isotopy classes of knots.  
\begin{corollary}
Let $I$ be an arbitrary knot invariant and let $p$ be the map defined by Theorem \ref{theorem1} from weak (1,3) homotopy classes of knot projections to knot isotopy classes.  Then, $I \circ p$ is an invariant under weak (1, 3) homotopy.  
\end{corollary}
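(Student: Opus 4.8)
The plan is to observe that this corollary is an immediate formal consequence of Theorem~\ref{theorem1}, so the proof amounts to chaining two facts together. First I would recall the content of Theorem~\ref{theorem1}: the positive resolution $p$ descends to a well-defined map on weak $(1,3)$ homotopy classes. Concretely, if two knot projections $P$ and $P'$ are related by a finite sequence of $H_1$ moves and weak $H_3$ moves, then the knot diagrams obtained by positively resolving all double points of $P$ and of $P'$ are related by a finite sequence of Reidemeister moves $\Omega_1$ and $\Omega_3$ together with isotopies on $S^2$, hence they represent the same knot isotopy class. Thus $p(P) = p(P')$ in the set of knot isotopy classes whenever $P$ and $P'$ are weak $(1,3)$ homotopic.

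Next I would invoke the defining property of a knot invariant: $I$ is, by definition, a function on knot isotopy classes (equivalently, a quantity assigned to knot diagrams that is unchanged by $\Omega_1$, $\Omega_2$, $\Omega_3$), so it takes equal values on any two representatives of the same isotopy class. Combining this with the previous paragraph, for weak $(1,3)$ homotopic knot projections $P$ and $P'$ one has $(I \circ p)(P) = I(p(P)) = I(p(P')) = (I \circ p)(P')$, which is precisely the assertion that $I \circ p$ is an invariant under weak $(1,3)$ homotopy. If $I$ takes values in a set $S$, this exhibits $I \circ p$ as an $S$-valued invariant of weak $(1,3)$ homotopy classes.

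There is no genuine obstacle in this argument; the only substantive input is Theorem~\ref{theorem1}, and what remains is the tautology that the composition of a well-defined map into knot isotopy classes with a function on knot isotopy classes is a well-defined function on the source. I would also note that, as a practical consequence, $I \circ p$ can be evaluated on a weak $(1,3)$ homotopy class by choosing any representative knot projection, positively resolving all its double points, and applying $I$ to the resulting knot.
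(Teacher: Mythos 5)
Your argument is correct and coincides with the paper's intended reasoning: the paper states this corollary without any proof, treating it as an immediate consequence of Theorem~\ref{theorem1}, and your two-step composition (well-definedness of $p$ on weak $(1,3)$ homotopy classes, followed by constancy of $I$ on knot isotopy classes) is exactly that omitted argument made explicit.
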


\begin{example}
As is well known, tricolorability is a knot invariant.  Let $I_1$ be the map $I_1(D)$ $=$ $1$ if a knot diagram $D$ is tricolorable and the map $I_1(D)$ $=$ $0$ otherwise.  
Let $P_5$, $P_6$, and $P_7$ be the knot projections shown from the left to right as in Fig.~\ref{knotProjections}.  Thus, $I_1 \circ p (P_5)$ $=$ $0$, $I_1 \circ p (P_6)$ $=$ $1$, and $I_1 \circ p (P_7)$ $=$ $1$.  The diagram $P_7$ appears in \cite{HY}.  
\end{example}
\begin{figure}[h]
\begin{picture}(0,0)
\put(28,-10){$P_5$}
\put(134,-10){$P_6$}
\put(270,-10){$P_7$}
\end{picture}
\includegraphics[width=12cm]{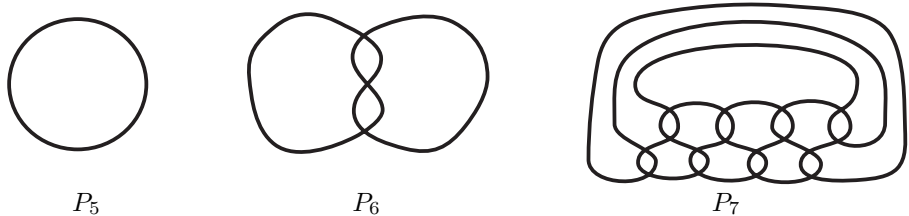}
\caption{Examples of knot projections.}\label{knotProjections}
\end{figure}

\section{Triviality of knot projections under weak (1, 3) homotopy}
Let us call the knot projection of $P_5$ in Fig.~\ref{knotProjections} {\it{the trivial diagram}}.  If a knot projection $P$ and the trivial diagram can be related by a finite sequences of $H_1$ and $H_3$ moves, the knot projection $P$ is described as {\it{trivial under weak (1, 3) homotopy}}.  If a diagram the same as $P_5$ is a knot diagram, we also call the knot diagram {\it{the trivial diagram}}.  
\begin{corollary}\label{trivialCondition}
Let $P$ be an arbitrary knot projection.  The knot projection $P$ is trivial under weak (1, 3) homotopy if and only if $P$ and the trivial diagram can be related by a finite sequence consisting of the first homotopy moves.  
\end{corollary}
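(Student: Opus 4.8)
The "if" direction is immediate: a finite sequence of first homotopy moves is, in particular, a finite sequence of first and weak third homotopy moves.

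For the converse, call a knot projection \emph{$1$-trivial} if it can be joined to the trivial diagram using $H_1$ moves alone; by Theorem~\ref{12theorem}(\ref{1stStatement}) this is the same as $P^{r}$... more precisely as $P^{1r}$ being isotopic on $S^{2}$ to the simple closed curve. The plan rests on the lemma: \emph{a $1$-trivial knot projection admits no weak $H_3$ move (in either direction)}. Granting it, the corollary follows at once. Take a finite sequence of $H_1$ and weak $H_3$ moves from $P$ to the trivial diagram. The trivial diagram is $1$-trivial, so by the lemma no weak $H_3$ move can be applied to it, hence the last move of the sequence is an $H_1$ move; therefore the second‑to‑last projection is again $1$-trivial, being $H_1$-equivalent to the trivial diagram. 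Repeating this argument while moving backward along the sequence shows that \emph{every} projection in it — in particular $P$ — is $1$-trivial, and that every move in the sequence is an $H_1$ move.

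To prove the lemma I would argue on chord (Gauss) diagrams. An $H_1$ move is exactly the insertion or deletion of a chord one of whose two arcs carries no endpoint of any other chord, and such a move never creates a pair of linked chords; since the simple closed curve has the empty chord diagram, Theorem~\ref{12theorem}(\ref{1stStatement}) shows that a knot projection is $1$-trivial precisely when no two of its chords are linked. On the other hand — and this is the step to be read off Viro's definition and Figure~\ref{weakPerestroika} — a weak $H_3$ move is performed at a triangular face whose three crossings are \emph{pairwise linked} in the chord diagram: in the complementary (``unlinked'') combinatorial type of triangular face each of the three vertices carries a loop lying on the side opposite the face, and this loop obstructs pushing the third strand across, so no flat third homotopy move is available there at all. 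Hence a knot projection with no pair of linked chords has no triangular face supporting a weak $H_3$ move, which is the lemma.

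The main obstacle is precisely this last point: one must verify from Figure~\ref{weakPerestroika} that the local picture of a weak $H_3$ move forces the three participating crossings to be pairwise linked — equivalently, that the ``unlinked'' triangular configuration admits no flat third homotopy move — and keep track of the fact that the inverse of a weak $H_3$ move shares this feature. The map $p$ of Theorem~\ref{theorem1} is a useful cross‑check here, since it sends weak $H_3$ moves to genuine $\Omega_3$ moves and thereby constrains the local combinatorics of the weak move. Once that verification is in hand, the reduction above and the appeal to Theorem~\ref{12theorem} are routine.
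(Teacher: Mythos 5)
Your overall architecture (reduce to the lemma that a $1$-trivial projection supports no weak $H_3$ move, then induct backwards along the sequence) is genuinely different from the paper's proof and would, if completed, even yield a stronger conclusion (every projection in a trivializing sequence is $1$-trivial and every move in it is an $H_1$ move). Your characterization of $1$-triviality as ``no two chords linked'' is also correct. But the step you yourself flag as the crux --- identifying which $3$-gons support a weak $H_3$ move --- is resolved incorrectly, and as stated it breaks the argument. It is false that the ``unlinked'' triangular face admits no flat third homotopy move: take the $3$-crossing projection with Gauss code $ABBCCA$, a triangle with a small curl attached at each corner. Its central $3$-gon is a genuine face, its three chords are pairwise unlinked, and an $H_3$ move applied there (the move is local to a neighborhood of the closed triangle; the curls sit outside that neighborhood and obstruct nothing) produces exactly the standard trefoil projection, whose three chords are pairwise linked. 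So the pairwise-linked and pairwise-unlinked configurations are the two sides of one and the same $H_3$ move. If that move were the weak one --- which is what your identification ``weak $=$ pairwise linked'' forces --- then the trefoil projection would be weak $(1,3)$ equivalent to a $1$-trivial projection, contradicting the corollary itself (and Theorem \ref{theorem1}, since $p$ sends the trefoil projection to the tricolorable positive trefoil). Hence that move is the \emph{strong} one, and your lemma, with your justification, is unproved.

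The cross-check you propose via $p$ would in fact have caught this: orienting the curve, the three sides of a pairwise-linked (trefoil-type) $3$-gon are coherently oriented around the triangle, and the positive resolution of such a triangle is the cyclic over/under pattern that does \emph{not} admit $\Omega_3$; so by Theorem \ref{theorem1} the weak $3$-gon cannot be the pairwise-linked one. The correct classification (forced by Fig.~\ref{thirdMove}) is that the weak move acts at a $3$-gon whose three chords contain exactly one linked pair, turning it into one with exactly two linked pairs. Your lemma happens to survive this correction --- both sides of a weak $H_3$ move contain at least one linked pair, so a projection with no linked chords admits no weak move in either direction --- but that is a fact you have not established, and it is the entire content of the converse direction. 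For contrast, the paper avoids this local classification altogether: it applies $p$, observes that $p(P)$ is a positive diagram, invokes the Przytycki--Taniyama result that a positive diagram of the unknot reduces to the trivial diagram by $\Omega_1$ moves alone, and forgets the over/under data to obtain the desired $H_1$ sequence. Your route trades that external input for a careful analysis of weak $3$-gons; that analysis is the part still missing.
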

\begin{proof}
By Theorem \ref{theorem1}, there exists the map from weak (1,3) homotopy classes of knot projections to knot isotopy classes, which is denoted by $p$.  Then if $T$ is a weak (1, 3) homotopy classes of a knot projection containing the trivial diagram, then $p(T)$ is the unknot that is defined as the knot isotopy classes containing the trivial diagram. 

We will prove that the converse of the claim.  If $P$ is an arbitrary weak homotopy classes of an arbitrary knot projection, then $p(P)$ is a positive knot by the definition of $p$.  As is well known, if a positive knot diagram belongs to the isotopy class of the unknot, there exists a finite sequence of $\Omega_1$ moves (Fig.~\ref{reidemeister}) between the positive knot diagram and the trivial diagram (e.g. \cite{PT}).  Neglecting the information on overpasses and underpasses at double points, we can regard the sequence as one that consists of $H_1$ moves between a knot projection and the trivial diagram.  This completes the proof.  
\end{proof}
Theorem \ref{theorem1} and Corollary \ref{trivialCondition} imply the following.  
\begin{corollary}
Let $p$ the map defined by Theorem \ref{theorem1} from a weak (1,3) homotopy class of knot projections to a knot isotopy class.  If the weak (1, 3) homotopy class containing the trivial diagram is denoted by $T$ and the unknot is denoted by $U$, then
\[p(T) = U~{\text{and}}~p^{-1}(U) = T.  \] 
\end{corollary}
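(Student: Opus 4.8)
The plan is to deduce both equalities directly from Theorem \ref{theorem1} and Corollary \ref{trivialCondition}, treating the statement as essentially a repackaging of those two results. First I would verify $p(T) = U$: the trivial diagram $P_5$ has no double points, so its positive resolution is the crossingless diagram, which represents the unknot $U$; since by Theorem \ref{theorem1} the value $p(T)$ does not depend on the chosen representative of the weak $(1,3)$ homotopy class $T$, this gives $p(T) = U$. In particular $T \subseteq p^{-1}(U)$.

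For the reverse inclusion $p^{-1}(U) \subseteq T$, I would take an arbitrary knot projection $P$ whose weak $(1,3)$ homotopy class satisfies $p([P]) = U$. By construction of $p$ via positive resolutions, the diagram obtained from $P$ is a positive knot diagram, and the hypothesis $p([P]) = U$ says that this positive diagram represents the unknot. Invoking the fact already used in the proof of Corollary \ref{trivialCondition} (cf. \cite{PT}) that a positive diagram of the unknot can be reduced to the trivial diagram using only $\Omega_1$ moves, and then forgetting the over/under data at each crossing, I obtain a finite sequence of $H_1$ moves carrying $P$ to the trivial diagram. By the definition of triviality under weak $(1,3)$ homotopy, equivalently by Corollary \ref{trivialCondition}, this forces $[P] = T$. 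Hence $p^{-1}(U) \subseteq T$, and together with the first inclusion we get $p^{-1}(U) = T$.

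I do not expect a genuine obstacle: the substantive work is already contained in Theorem \ref{theorem1}, which gives the well-definedness of $p$ on weak $(1,3)$ homotopy classes, and in the proof of Corollary \ref{trivialCondition}, which supplies the positive-diagram-of-the-unknot argument. The only point requiring mild care is the set-theoretic bookkeeping behind the equality $p^{-1}(U) = T$, namely checking both that $T$ lies in the fiber over $U$ and that nothing else does; these two checks are exactly the ``only if'' and ``if'' halves distributed between the identity $p(T) = U$ and Corollary \ref{trivialCondition}.
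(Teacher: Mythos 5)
Your proposal is correct and follows essentially the same route as the paper, which derives $p(T)=U$ from Theorem \ref{theorem1} and $p^{-1}(U)=T$ from Corollary \ref{trivialCondition}. You simply spell out the two inclusions and the positive-diagram argument in more detail than the paper's one-line proof does.
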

\begin{proof}
The first equation is assured by Theorem \ref{theorem1} and the second equation is obtained from Corollary \ref{trivialCondition}.  
\end{proof}

\section*{Acknowledgments}
The authors would like to thank Professor Kouki Taniyama for his fruitful comments.  The work on this paper by N. Ito was partially supported by JSPS KAKENHI Grant Number 23740062.

\vspace{3mm}

Waseda Institute for Advanced Study, 1-6-1, Nishi Waseda Shinjuku-ku Tokyo 169-8050, Japan\\
{\it{E-mail address}}: {\texttt{noboru@moegi.waseda.jp}}

\vspace{3mm}
Department of Mathematics, School of Education, Waseda University, 1-6-1 Nishi Waseda, Shinjuku-ku, Tokyo, 169-8050, Japan\\
{\it{E-mail address}}: {\texttt{max-drive@moegi.waseda.jp}}


\begin{thebibliography}{9}
\item R.~Fenn, Techniques of geometric topology, London Mathematical Society Lecture Note Series, 57. \emph{Cambridge University Press, Cambridge}, 1983.
\item R.~Fenn and P.~Taylor, Introducing doodles, \emph{Topology of low-dimensional manifolds (Proc. Second Sussex Conf., Chelwood Gate, 1997)}, pp. 37--43, Lecture Notes in Math., 722, \emph{Springer, Berlin}, 1979.
\item N.~Ito and Y.~Takimura, (1, 2) and weak (1, 3) homotopies on knot projections, \emph{J.~Knot Theory Ramifications} {\bf 22} (2013), 1350085, 14 pp.
\item M.~Khovanov, Doodle groups, Trans. Amer. Math. Soc. {\bf 349} (1997), 2297--2315.
\end{thebibliography}

\begin{thebibliography}{99}
\bibitem[AST]{AST} C. Adams, R. Shinjo, Tanaka, Complementary regions of knot and link diagram.  {\it{Ann. Comb.}} {\bf{15}} (2011), 549--563.  
\bibitem[A1]{arnold1} V. I. Arnold, Plane curves, their invariants, perestroikas and classifications.  With an appendix by F. Aicardi. Adv. Soviet Math., 21, {\it{Singularities and bifurcations,}} 33--91, {\it{Amer. Math. Soc. Providence, RI,}} 1994.  
\bibitem[A2]{arnold2} V. I. Arnold, Topological invariants of plane curves and caustics.  University Lecture Series, 5.  {\it{American Mathematical Society, Providence, RI,}} 1994. 
\bibitem[EHK]{EHK} S. Eliahou, F. Harary, and L. H. Kauffman, Lune-free knot graphs, {\it{J. Knot Theory Ramifications}} {\bf{17}} (2008), 55--74.  
\bibitem[HY]{HY} T. J. Hagge and J. Yazinski, On the necessity of Reidemeister move 2 for simplifying immersed planar curves, arXiv: 0812.1241.    
\bibitem[P]{polyak} M. Polyak, Invariants of curves and fronts via Gauss diagrams.  {\it{Topology}} {\bf{37}} (1998), 989--1009.  
\bibitem[PT]{PT} J. H. Przytycki and K. Taniyama, Almost positive links have negative signature, {\it{J. Knot Theory Ramifications}} {\bf{19}} (2010), 187--289.  
\bibitem[T1]{taniyama1} K. Taniyama, A partial order of knots.  {\it{Tokyo J. Math.}} {\bf{12}} (1989), 205--229.  
\bibitem[T2]{taniyama2} K. Taniyama, A partial order of links.  {\it{Tokyo J. Math.}} {\bf{12}} (1989), 475--484.  
\bibitem[Va]{vassiliev} V. A. Vassiliev, Cohomology of knot spaces.  {\it{Theory of singularities and its applications,}} 23--69, Adv. Soviet Math., 1, {\it{Amer. Math. Soc., Providence, RI,}} 1990.  
\bibitem[Vi]{viro} O. Viro, Generic immersions of the circle to surfaces and the complex topology of real algebraic curves.  {\it{Topology of real algebraic varieties and related topics,}} 231--252, Amer. Math. Soc. Transl. Ser. 2, 173, {\it{Amer. Math. Soc., Providence, RI,}} 1996.  
\bibitem[W]{whitney} H. Whitney, On regular closed curves in the plane.  {\it{Compositio Math.}} 4 (1937), 276--284.  
\end{thebibliography}
\end{document}